\documentclass[11pt]{amsart}

\usepackage{amsmath}
\usepackage{amsfonts}
\usepackage[active]{srcltx}

\newtheorem{theorem}{Theorem}

\newtheorem{lemma}{Lemma}

\newtheorem{definition}{Definition}
\newtheorem{corollary}{Corollary}
\newtheorem*{TDW}{Theorem DW}

\numberwithin{equation}{section}

\begin{document}
{On the convergence of double Fourier series of functions of bounded partial generalized  variation.}

\medskip
{Ushangi Goginava and Artur Sahakian}

\vskip1cm
{ABSTRACT.}
The convergence of double Fourier series of functions of bounded partial $\Lambda$-variation is investigated. The sufficient and necessary conditions on the sequence $\Lambda=\{\lambda_n\}$ are found for the convergence of  Fourier series of functions of bounded partial $\Lambda$-variation.

\vskip2cm
{U. Goginava, Institute of Mathematics, Faculty of Exact and Natural
Sciences, Tbilisi State University, Chavchavadze str. 1, Tbilisi 0128,
Georgia}

{z\_goginava@hotmail.com}

\medskip
{A. Sahakian, Erevan State University, Faculty of Mathematics and Mechanics,
Alex Manoukian str. 1, Yerevan 0025, Armenia}

{sart@ysu.am}
\medskip
\medskip

\vfill
\eject
{On the convergence of double Fourier series}

Artur Sahakian

sart@ysu.am
\eject

\section{Classes of Functions of Bounded Generalized  Variation}
In 1881 Jordan \cite{Jo} introduced a class of functions of bounded variation and applied it to the theory of Fourier series. Hereinafter this notion was generalized by many authors (quadratic variation, $\Phi$-variation, $\Lambda$-variation ets., see \cite{Wi}-\cite{Ch}). In two dimensional case the class BV of functions of bounded variation was introduced by Hardy \cite{Ha}.

Let $f$ be a real function of two variable of period $2\pi $ with respect to
each variable. Given intervals $I=(a,b)$, $J=(c,d)$ and points $x,y$ from
$T:=[0,2\pi ]$ we denote
\begin{equation*}
f(I,y):=f(b,y)-f(a,y),\qquad f(x,J)=f(x,d)-f(x,c)
\end{equation*}
and
\begin{equation*}
f(I,J):=f(a,c)-f(a,d)-f(b,c)+f(b,d).
\end{equation*}
Let $E=\{I_{i}\}$ be a collection of nonoverlapping intervals from $T$ ordered in
arbitrary way and let $\Omega $ be the set of all such collections $E$. Denote
by $\Omega _{n}$ set of all collections of $n$ nonoverlapping intervals $%
I_{k}\subset T.$

For the sequence of positive numbers $\Lambda =\{\lambda
_{n}\}_{n=1}^{\infty }$ we denote
\begin{equation*}
\Lambda V_{1}(f)=\sup_{y}\sup_{E\in \Omega }\sum_{n}\frac{|f(I_{i},y)|}{%
\lambda _{i}}\,\,\,\,\,\,\left( E=\{I_{i}\}\right) ,
\end{equation*}
\begin{equation*}
\Lambda V_{2}(f)=\sup_{x}\sup_{F\in \Omega }\sum_{m}\frac{|f(x,J_{j})|}{%
\lambda _{j}}\qquad (F=\{J_{j}\}),
\end{equation*}
\begin{equation*}
\Lambda V_{1,2}(f)=\sup_{F,\,E\in \Omega }\sum_{i}\sum_{j}\frac{%
|f(I_{i},J_{j})|}{\lambda _{i}\lambda _{j}}.
\end{equation*}

\begin{definition}
We say that the function $f$ has Bounded $\Lambda $-variation on $T=[0,2\pi]^2$ and write $%
f\in \Lambda BV$, if
\begin{equation*}
\Lambda V(f):=\Lambda V_{1}(f)+\Lambda V_{2}(f)+\Lambda V_{1,2}(f)<\infty .
\end{equation*}
We say that the function $f$ has Bounded Partial $\Lambda $-variation and
write $f\in P\Lambda BV$ if
\begin{equation*}
P\Lambda V(f):=\Lambda V_{1}(f)+\Lambda V_{2}(f)<\infty .
\end{equation*}
\end{definition}

If $\lambda_n\equiv 1$ (or if $0<c<\lambda_n<C<\infty,\ n=1,2,\ldots$) the
classes $\Lambda BV$ and $P\Lambda BV$ coincide with the Hardy class $BV$ and PBV respectively. Hence it is
reasonable to assume that $\lambda_n\to\infty$ and since the intervals in
$E=\{I_i\}$ are ordered arbitrarily, we will suppose, without
loss of generality, that the sequence $\{\lambda_n\}$ is increasing. Thus,
\begin{equation}\label{Lambda}
1<\lambda_1\leq \lambda_2\leq\ldots,\qquad \lim_{n\to\infty}\lambda_n=\infty.
\end{equation}

In the case when $\lambda _{n}=n,\ n=1,2\ldots $ we say \textit{Harmonic
Variation} instead of $\Lambda $-variation and write $H$ instead of $\Lambda$ ($HBV$, $PHBV$, $HV(f)$, ets).

The notion of $\Lambda$-variation was introduced by
D. Waterman \cite{W} in one dimensional case and A. Sahakian \cite{Saha} in two dimensional case.
\begin{definition}
Let $\Phi $-be a strictly increasing continuous function on $[0,+\infty )$
with $\Phi \left( 0\right) =0$. We say that the function $f$ has bounded
partial $\Phi $-variation on $T^{2}$ and write $f\in PBV_\Phi$, if
\begin{equation*}
V_{\Phi }^{\left( 1\right) }\left( f\right)
:=\sup\limits_{y}\sup\limits_{\{I_{i}\}\in \Omega
_{n}}\sum\limits_{i=1}^{n}\Phi \left( |f\left( I_{i},y\right) |\right)
<\infty ,\quad n=1,2,...,
\end{equation*}
\begin{equation*}
V_{\Phi }^{\left( 2\right) }\left( f\right)
:=\sup\limits_{x}\sup\limits_{\{J_{j}\}\in \Omega
_{m}}\sum\limits_{j=1}^{m}\Phi \left( |f\left( x,J_{j}\right) |\right)
<\infty ,\quad m=1,2,....
\end{equation*}
\end{definition}

In the case when $\Phi \left( u\right) =u^{p},\,p\geq 1$, the notion of bounded
partial $p$-variation (class $PBV_p$) was introduced in \cite{GoJAT}.

\begin{theorem}\label{T1}
Let $\Lambda =\{\lambda _{n}=n\gamma_n\}$ and $\gamma_{n}\geq \gamma _{n+1}>0,\ n=1,2,....\,\,\,$. \\
1) If
\begin{equation}\label{T1-1}
\sum_{n=1}^{\infty }\frac{\gamma _{n}}{n}<\infty,
\end{equation}
then $P\Lambda BV\subset HBV$.\\
2) If, in addition, for some $\delta>0$
\begin{equation}\label{T1-10}
\gamma_n=O(\gamma_{n^{[1+\delta]}})\quad \text{as}\quad n\to \infty
\end{equation}
and
\begin{equation}\label{T1-11}
\sum_{n=1}^{\infty }\frac{\gamma _{n}}{n}=\infty,
\end{equation}
then $P\Lambda BV\not\subset HBV$.
\end{theorem}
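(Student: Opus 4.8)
The plan is to prove the two parts separately, using the hypothesis $\lambda_n = n\gamma_n$ with $\gamma_n$ decreasing throughout.

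\medskip
\textbf{Part 1 (inclusion).} Suppose $f\in P\Lambda BV$, so that $\Lambda V_1(f)<\infty$ and $\Lambda V_2(f)<\infty$; I must show $HV_1(f)<\infty$ and $HV_2(f)<\infty$ (the mixed term does not enter $HBV$ in the relevant sense, but in fact the same argument handles each partial variation and symmetry reduces everything to bounding $HV_1$). Fix $y$ and a finite collection $E=\{I_i\}_{i=1}^N$ of nonoverlapping intervals; after relabelling, assume $|f(I_1,y)|\ge |f(I_2,y)|\ge\cdots\ge |f(I_N,y)|$. The key elementary inequality is the standard one used in Waterman-type arguments: for the decreasing rearrangement, $\sum_{i=1}^N |f(I_i,y)|/i \le C\,\Lambda V_1(f)$ provided $\sum 1/\lambda_i$ compares suitably. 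Concretely, since $\{\lambda_n\}$ is increasing, for any permutation the partial sums of $|f(I_i,y)|/\lambda_i$ over the $k$ largest terms are at least $\big(\sum_{i\le k}|f(I_i,y)|\big)/\lambda_k$; an Abel summation then gives
\[
\sum_{i=1}^{N}\frac{|f(I_i,y)|}{i}
\le \sum_{k=1}^{N}\Big(\frac1k-\frac1{k+1}\Big)\sum_{i\le k}|f(I_i,y)|
+\frac1{N+1}\sum_{i\le N}|f(I_i,y)|
\le C\,\Lambda V_1(f)\sum_{k=1}^{\infty}\frac{\lambda_k}{k^2},
\]
where we also use $\sum_{i\le k}|f(I_i,y)|\le \lambda_k\,\Lambda V_1(f)$ coming from the defining supremum applied to the ordered collection weighted by the largest $\lambda$. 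Since $\lambda_k = k\gamma_k$, we have $\sum_k \lambda_k/k^2 = \sum_k \gamma_k/k<\infty$ by \eqref{T1-1}, so $HV_1(f)\le C\,\Lambda V_1(f)$; the same for $HV_2$. Hence $f\in HBV$.

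\medskip
\textbf{Part 2 (non-inclusion).} Here I would construct explicitly a function $f\in P\Lambda BV\setminus HBV$ under hypotheses \eqref{T1-10}–\eqref{T1-11}. The natural candidate is a function of one variable only in each slice but arranged so that its single-variable $\Lambda$-variation is finite while its harmonic variation is infinite — that is, reduce to the one-dimensional Waterman theory, where it is classical that $\Lambda BV_{[0,2\pi]}\not\subset HBV_{[0,2\pi]}$ exactly when $\sum \gamma_n/n=\infty$. Take $g$ a one-dimensional function with $\Lambda V(g)<\infty$ but $HV(g)=\infty$; a standard choice is $g$ supported on a sequence of disjoint intervals $\{(\alpha_k,\beta_k)\}$ on which $g$ takes alternating values $\pm a_k$ with $a_k$ chosen so that $\sum a_k/\lambda_k<\infty$ but $\sum a_k/k=\infty$ — the gap between these is nonempty precisely because of \eqref{T1-11}, and the role of the regularity condition \eqref{T1-10} is to guarantee that grouping the intervals into blocks of geometrically increasing length (size roughly $n^{[1+\delta]}/n$) does not destroy the divergence, i.e. that $\gamma$ does not oscillate too wildly between scales $n$ and $n^{1+\delta}$. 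Then set $f(x,y):=g(x)$ (or a symmetrized version $g(x)+g(y)$); clearly $\Lambda V_1(f)=\Lambda V(g)<\infty$ and $\Lambda V_2(f)=0$, so $f\in P\Lambda BV$, while $HV_1(f)=HV(g)=\infty$, so $f\notin HBV$.

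\medskip
The main obstacle is Part 2, specifically making precise how condition \eqref{T1-10} lets one pass from the pointwise divergence $\sum\gamma_n/n=\infty$ to an actual construction: one needs to choose the amplitudes $a_k$ and interval lengths so that both a convergence estimate against $\lambda_n=n\gamma_n$ and a divergence estimate against $n$ hold simultaneously, and the comparison $\gamma_n\asymp\gamma_{n^{1+\delta}}$ is exactly what is needed to control the error when one replaces a sum over a dyadic-type block by its value at a single representative index. I would handle this by splitting $\{1,2,\dots\}$ into consecutive blocks $B_k$ with $|B_k|$ growing like a fixed power, assigning one "bump" of $g$ per block with amplitude $a_k\sim\gamma_{N_k}$ where $N_k=\min B_k$, and then verifying $\sum_k a_k\cdot(\text{something})/\lambda$ converges via \eqref{T1-1}-type tail bounds on $\gamma_n/n$ within blocks while $\sum_k a_k\cdot|B_k|/N_k\asymp\sum_n\gamma_n/n=\infty$; condition \eqref{T1-10} is invoked to say $\gamma_n\asymp a_k$ for all $n\in B_k$, closing the argument.
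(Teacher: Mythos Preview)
Your proposal rests on a misreading of the definition of $HBV$. By Definition~1 in the paper, $f\in HBV$ means $HV_1(f)+HV_2(f)+HV_{1,2}(f)<\infty$; the mixed term $HV_{1,2}$ is \emph{not} omitted. In fact, since $\lambda_n=n\gamma_n\le\gamma_1 n$ (as $\gamma_n$ is decreasing), one has trivially $HV_j(f)\le\gamma_1\,\Lambda V_j(f)$ for $j=1,2$, so $P\Lambda BV\subset PHBV$ holds for free with no hypothesis on $\sum\gamma_n/n$. The entire content of Part~1 is the bound on $HV_{1,2}(f)$, which your argument never touches. The paper obtains it by splitting $\sum_{i,j}|f(I_i,J_j)|/(ij)$ into the regions $i\le j$ and $i>j$; for $i\le j$ one uses $|f(I_i,J_j)|\le 2\sup_x|f(x,J_j)|$ together with $\lambda_j/j\le\lambda_i/i$ and $\Lambda V_2(f)<\infty$, and the factor $\sum_i\lambda_i/i^2=\sum_i\gamma_i/i$ appears exactly as the price of this reduction.

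The same misreading makes your Part~2 collapse. If $f(x,y)=g(x)$ depends on $x$ alone, then $f(I,J)=0$ for every rectangle $I\times J$, so $HV_{1,2}(f)=0$; combined with $HV_1(f)\le\gamma_1\Lambda V_1(f)<\infty$, this forces $f\in HBV$. Equivalently, the one-dimensional $g$ you seek---with $\Lambda V(g)<\infty$ but $HV(g)=\infty$---cannot exist here, since $\lambda_n\le\gamma_1 n$ gives $HV(g)\le\gamma_1\Lambda V(g)$. The counterexample must be genuinely two-dimensional, and the paper builds one by placing values $t_j$ at lattice points $(1/i,1/j)$ for $j<i\le j+m_j$ with $m_j=[j^{1+\delta}]$ and $t_j=(\sum_{i\le m_j}1/\lambda_i)^{-1}$. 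Each row and each column then has $\Lambda$-variation at most $1$ (this is where Lemma~1 on rearrangements is used, and where the block structure you mention actually lives), while the mixed harmonic variation $\sum_{i,j}|f(I_i,J_j)|/(ij)$ diverges; condition~\eqref{T1-10} enters to show $t_j\log j\ge c\gamma_j$, and then \eqref{T1-11} gives the divergence.
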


\textbf{Proof.} 1) Let $f\in P\Lambda BV$ and
\begin{equation*}
\sum\limits_{i,j=1}^{\infty }\frac{|f\left( I_{i},J_{j}\right) |}{ij}%
=\sum\limits_{i\leq j}\frac{|f\left( I_{i},J_{j}\right) |}{ij}%
+\sum\limits_{i>j}\frac{|f\left( I_{i},J_{j}\right) |}{ij}:=I_{1}+I_{2}.
\end{equation*}
Then according to (\ref{T1-1}),
\begin{eqnarray*}
I_{1} &=&\sum\limits_{i=1}^{\infty }\frac{1}{i}\sum\limits_{j=i}^{\infty }%
\frac{|f\left( I_{i},J_{j}\right) |}{j} \\
&\leq &2\sum\limits_{i=1}^{\infty }\frac{1}{i}\sup\limits_{x}\sum%
\limits_{j=i}^{\infty }\frac{|f\left( x,J_{j}\right) |}{\lambda _{j}}\frac{%
\lambda _{j}}{j} \\
&\leq &2\sum\limits_{i=1}^{\infty }\frac{\lambda _{i}}{i^{2}}%
\sup\limits_{x}\sum\limits_{j=i}^{\infty }\frac{|f\left( x,J_{j}\right) |}{%
\lambda _{j}} \\
&\leq &2\Lambda V_{2}\left( f\right) \sum\limits_{i=1}^{\infty }\frac{%
\lambda _{i}}{i^{2}}\leq c\Lambda V_{2}\left( f\right) <\infty .
\end{eqnarray*}
Similary, $I_{2}\leq c\Lambda V_{1}\left( f\right)<\infty .$

2) In the proof of the second statement of Theorem \ref{T1} we use the following well known lemma .
\begin{lemma}\label{L1}
Let $u_i$ and $v_i$, $i=1,2,\ldots,j$ be two increasing
(decreasing) sequences of positive numbers. Then for any rearrangement $\{\sigma(i)\}$ of the set $\{1,2,\ldots,j\}$
$$
\sum_{i=1}^ju_iv_{j-i+1}\leq \sum_{i=1}^ju_iv_{\sigma(i)}
\leq \sum_{i=1}^ju_iv_i.
$$
\end{lemma}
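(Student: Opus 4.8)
This is the classical rearrangement inequality, so the plan is to reproduce the standard exchange (``bubble sort'') argument. The one computational ingredient is the elementary remark that if $a\le b$ and $c\le d$ are real numbers then $(b-a)(d-c)\ge 0$, i.e.\ $ad+bc\le ac+bd$; I would record this at the outset and use it at every step.

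First I would prove the upper bound assuming $\{u_i\}$ and $\{v_i\}$ are both increasing. Since there are only finitely many permutations of $\{1,\dots,j\}$, some permutation $\tau$ maximizes $S(\rho):=\sum_{i=1}^{j}u_iv_{\rho(i)}$. If $\tau\neq\mathrm{id}$, pick $p<q$ with $\tau(p)>\tau(q)$ and let $\tau^{*}$ be obtained from $\tau$ by interchanging its values at $p$ and $q$; then $S(\tau^{*})-S(\tau)=(u_q-u_p)\bigl(v_{\tau(p)}-v_{\tau(q)}\bigr)\ge 0$, while $\tau^{*}$ has strictly fewer inversions than $\tau$. Iterating this finitely often yields the identity permutation without ever decreasing $S$, so the maximal value of $S$ is $S(\mathrm{id})=\sum_i u_iv_i$; in particular $\sum_i u_iv_{\sigma(i)}\le\sum_i u_iv_i$ for every permutation $\sigma$.

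For the lower bound I would apply the inequality just proved to the two increasing sequences $\{u_i\}$ and $\{-v_{j-i+1}\}_{i=1}^{j}$: this gives $\sum_i u_i(-v_{j-\rho(i)+1})\le\sum_i u_i(-v_{j-i+1})$ for all $\rho$, and taking $\rho(i)=j-\sigma(i)+1$ and multiplying by $-1$ yields $\sum_i u_iv_{\sigma(i)}\ge\sum_i u_iv_{j-i+1}$. Finally, the case in which $\{u_i\}$ and $\{v_i\}$ are both decreasing reduces to the increasing case: replacing $u_i$ by $u_{j-i+1}$ and $v_i$ by $v_{j-i+1}$ makes the sequences increasing and carries each of the sums $\sum u_iv_i$, $\sum u_iv_{\sigma(i)}$, $\sum u_iv_{j-i+1}$ into a sum of the same shape (with a correspondingly relabelled permutation) for the new sequences. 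I do not expect a real obstacle anywhere; the only points needing a little care are verifying that an inverted swap genuinely decreases the inversion count (so that the iteration terminates) and keeping track of the substitution $i\mapsto j-i+1$ in the passage to the lower bound. A reader preferring to skip the details may instead simply cite Hardy, Littlewood and P\'olya, \emph{Inequalities}.
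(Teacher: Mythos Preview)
Your argument is correct; this is the standard exchange proof of the rearrangement inequality, and the small worry you flag about the inversion count is unfounded: for any $p<q$ with $\tau(p)>\tau(q)$, swapping the values at $p$ and $q$ never creates a new inversion involving an intermediate index $r$ (a short case check on whether $\tau(r)$ lies below $\tau(q)$, between $\tau(q)$ and $\tau(p)$, or above $\tau(p)$ shows the contribution from each such $r$ is nonincreasing), while the pair $(p,q)$ itself ceases to be an inversion, so the count drops strictly. One cosmetic point: in passing to the lower bound you apply the upper bound to the sequence $\{-v_{j-i+1}\}$, which is increasing but not positive; this is harmless because your swap computation uses only monotonicity, not positivity, but it is worth saying so explicitly (or, equivalently, shifting by a large constant).

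As for comparison with the paper: there is nothing to compare. The paper does not prove Lemma~\ref{L1}; it merely introduces it as a ``well known lemma'' and uses it. Your proposal therefore supplies what the paper omits, and your closing remark that one may simply cite Hardy--Littlewood--P\'olya is exactly in the spirit of how the authors treat the result.
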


Let (\ref{T1-10}) and (\ref{T1-11}) be fulfilled and define
\begin{equation*}
f( x,y) :=
\begin{cases}
t_j,& x=\frac1i,\  y=\frac1j,\ j<i\leq j+m_j,\ i,j=1,2,... \\
0,&\text{otherwise}
\end{cases},
\end{equation*}
where
\begin{equation}\label{tj}
t_j:=\left(\sum_{i=1}^{m_j}\frac1{\lambda_j}
\right)^{-1},\qquad m_j=\left[j^{1+\delta}\right],\quad j=1,2,\ldots
\end{equation}

Let $x=1/i$ and let $j(i)$ be the smallest integer satisfying
\begin{equation}\label{ji}
j(i)+m_{j(i)}\geq i.
\end{equation}


Since $t_j$ is decreasing and $\lambda_j$ is increasing, using Lemma \ref{L1} we can write
\begin{eqnarray*}
&&\sup\limits_{F\in \Omega }\sum\limits_{j=1}^{\infty }\frac{|f\left(
1/i,J_{j}\right) |}{\lambda _{j}} \\
&= &\sum_{j=j(i)}^{i-1}
\frac{{t_{j}}}{\lambda_{j-j(i)} } \leq
t_{j(i)}\sum_{j=1}^{i-j(i)}
\frac{1}{\lambda_j}\leq
t_{j(i)}\sum_{j=1}^{m_{j(i)}}\frac{1}{\lambda_j}=1.
\end{eqnarray*}
Hence
\begin{equation}\label{4}
\Lambda V_{2}(f)\leq1 .
\end{equation}

For $y=1/j$ we have
$$
\sup\limits_{E\in \Omega }\sum\limits_{i=1}^{\infty }\frac{|f\left(I_{i},1/j\right) |}{\lambda _{i}}
=
t_{j}\sum_{i=1}^{m_{j}}\frac{1}{\lambda_i}=1.
$$
Consequently,
\begin{equation}\label{5}
\Lambda V_{1}(f)\leq1 .
\end{equation}

Combining (\ref{4}) and (\ref{5}) we conclude that $f\in P\Lambda BV.$

Now we prove that $f\not\in HBV.$
From (\ref {T1-10}) and (\ref{tj})follows that
$$
\sum_{i=1}^{m_j}\frac1{\lambda_i}=
\sum_{i=1}^{m_j}\frac1{i\gamma_i}\leq
C\frac{\log m_j}{\gamma_{m_j}}\leq
C\frac{\log j}{\gamma_{j}}.
$$
Hence
\begin{equation}\label{tjl}
t_j\cdot{\log j}\geq c{\gamma_j},\qquad j=2,3,\ldots
\end{equation}
and from the definition of $f$, (\ref{tj}) and (\ref{T1-11}) we obtain
\begin{eqnarray*}
&&\sup\limits_{E,F\in \Omega }\sum\limits_{i,j}\frac{|f\left(
I_{i},J_{j}\right) |}{ij} \\
&\geq &\sum\limits_{j=1}^{\infty }
\frac {t_j}j
\sum\limits_{i=j+1}^{j+m_j}\frac1i
\geq
c\sum\limits_{j=1}^{\infty }
\frac {t_j}j\log(j+m_j)
\geq
c\sum\limits_{j=1}^{\infty }
\frac {\gamma_j}j=\infty .
\end{eqnarray*}

Theorem 1 is proved.

Taking $\lambda_n\equiv 1$ \, and $\lambda_n=n$ \,in Theorem 1, we get

\begin{corollary}
$PBV\subset HBV$ and $PHBV\not\subset HBV$.
\end{corollary}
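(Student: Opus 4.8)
The corollary follows by specializing Theorem~\ref{T1} to the two sequences $\lambda_n\equiv 1$ and $\lambda_n=n$, so the plan is simply to verify that in each case the relevant hypotheses of Theorem~\ref{T1} are met. Write $\lambda_n=n\gamma_n$; then $\lambda_n\equiv 1$ corresponds to $\gamma_n=1/n$ and $\lambda_n=n$ corresponds to $\gamma_n\equiv 1$. In both cases $\gamma_n$ is positive and nonincreasing, so the standing assumption $\gamma_n\geq\gamma_{n+1}>0$ of Theorem~\ref{T1} holds.

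For the first inclusion, take $\gamma_n=1/n$. Then $\sum_{n=1}^\infty \gamma_n/n=\sum_{n=1}^\infty 1/n^2<\infty$, so \eqref{T1-1} is satisfied and part~1) of Theorem~\ref{T1} gives $PBV=P\Lambda BV\subset HBV$ (using here that for $\lambda_n\equiv 1$ the class $P\Lambda BV$ coincides with $PBV$, as noted after Definition~1). For the second, take $\gamma_n\equiv 1$. Then $\sum_{n=1}^\infty \gamma_n/n=\sum_{n=1}^\infty 1/n=\infty$, so \eqref{T1-11} holds; and \eqref{T1-10} is trivially satisfied for every $\delta>0$ since $\gamma_n=1=\gamma_{n^{[1+\delta]}}$, so $\gamma_n=O(\gamma_{n^{[1+\delta]}})$. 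Hence part~2) of Theorem~\ref{T1} applies and yields $PHBV=P\Lambda BV\not\subset HBV$ (again using that $\lambda_n=n$ is exactly the harmonic case, by the terminology fixed after Definition~1).

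There is essentially no obstacle here: the only thing to be careful about is matching the notation, namely that $\lambda_n\equiv 1$ really does put us in the regime $P\Lambda BV=PBV$ and $\lambda_n=n$ in the regime $P\Lambda BV=PHBV$, and that the degenerate choices of $\gamma_n$ still satisfy the monotonicity and (in the second case) the regularity condition \eqref{T1-10}. Both checks are immediate. Thus the corollary is an instantiation of Theorem~\ref{T1} and requires no separate argument beyond these substitutions.
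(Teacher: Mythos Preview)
Your proposal is correct and matches the paper's own justification exactly: the paper simply states that the corollary follows by taking $\lambda_n\equiv 1$ and $\lambda_n=n$ in Theorem~\ref{T1}. Your additional verification that the hypotheses (monotonicity of $\gamma_n$, \eqref{T1-1}, \eqref{T1-10}, \eqref{T1-11}) are met in these two cases is just a careful spelling-out of that one-line remark.
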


\begin{corollary}
Let $\Phi $ and $\Psi $ are conjugate functions in the sense of Yung ($%
ab\leq \Phi (a)+\Psi (b)$) and let for some $\{\lambda _{n}\}$ satisfying (1)
\begin{equation}\label{Phi}
\sum_{n=1}^{\infty }\Psi \left( \frac{1}{\lambda _{n}}\right) <\infty .
\end{equation}
Then $PBV_{\Phi}\subset HBV$.
In particular, $PBV_{p}\subset HBV$ for any $p>1$.
\end{corollary}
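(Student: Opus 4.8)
The plan is to deduce the corollary from Theorem~\ref{T1}: first embed $PBV_\Phi$ into a suitable class $P\Lambda BV$ by means of Young's inequality, and then use part~1) of Theorem~\ref{T1} to pass from $P\Lambda BV$ to $HBV$.

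The first step is to show that $PBV_\Phi\subset P\Lambda BV$ for \emph{every} sequence $\{\lambda_n\}$ satisfying (\ref{Lambda}) and (\ref{Phi}). Let $f\in PBV_\Phi$, fix a point $y$ and an arbitrary collection $\{I_i\}$ of nonoverlapping intervals, taken in any order. Applying $ab\le\Phi(a)+\Psi(b)$ with $a=|f(I_i,y)|$, $b=1/\lambda_i$ and summing in $i$,
\[
\sum_i\frac{|f(I_i,y)|}{\lambda_i}\le\sum_i\Phi\bigl(|f(I_i,y)|\bigr)+\sum_i\Psi\!\left(\frac1{\lambda_i}\right)\le V_\Phi^{(1)}(f)+\sum_{n=1}^{\infty}\Psi\!\left(\frac1{\lambda_n}\right).
\]
By (\ref{Phi}) the right-hand side is finite and independent of $y$ and of the collection, so $\Lambda V_1(f)<\infty$; interchanging the roles of the two variables gives $\Lambda V_2(f)<\infty$. (The only thing to check is that $\sum_i\Phi(|f(I_i,y)|)\le V_\Phi^{(1)}(f)$ also for countable collections, which follows from the definition of $V_\Phi^{(1)}$ by monotone passage to the limit, all summands being nonnegative.) Hence $f\in P\Lambda BV$.

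It then remains to choose, for the given pair $\Phi,\Psi$, a single sequence $\{\lambda_n\}$ meeting simultaneously (\ref{Lambda}), (\ref{Phi}) and the hypotheses of Theorem~\ref{T1}, i.e.\ $\lambda_n=n\gamma_n$ with $\gamma_n\ge\gamma_{n+1}>0$ and $\sum_n\gamma_n/n<\infty$; for such a sequence the first step gives $PBV_\Phi\subset P\Lambda BV$ and part~1) of Theorem~\ref{T1} gives $P\Lambda BV\subset HBV$, whence $PBV_\Phi\subset HBV$. In the case $\Phi(u)=u^p$, $p>1$, this is explicit: one takes $\Psi(v)=v^q$ with $1/p+1/q=1$ (then indeed $ab\le a^p+b^q$) and $\lambda_n=n^{1/q}(\log n)^{2}$, so that $\gamma_n=\lambda_n/n=n^{-1/p}(\log n)^{2}$ is eventually nonincreasing with $\sum_n\gamma_n/n<\infty$, while $\sum_n\Psi(1/\lambda_n)=\sum_n n^{-1}(\log n)^{-2q}<\infty$ because $2q>1$; adjusting finitely many terms (which alters the relevant sums only by bounded amounts) makes $\gamma_n$ nonincreasing throughout, and part~1) of Theorem~\ref{T1} yields $PBV_p\subset HBV$.

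The main obstacle is precisely this last step: converting the abstract hypothesis ``$\sum\Psi(1/\lambda_n)<\infty$ for some $\{\lambda_n\}$ satisfying (\ref{Lambda})'' into a witness of the very special form $\lambda_n=n\gamma_n$ with $\gamma_n$ nonincreasing and $\sum_n\gamma_n/n<\infty$ required by Theorem~\ref{T1}. For $p$-variation the reduction is immediate, as above. In general one must exploit the convexity of $\Psi$ (together with $\Psi(0)=0$), which forces $\Psi(v)\le\Psi(1)v$ for $0\le v\le1$ and hence a controlled decay of $\Psi(1/\lambda_n)$, in order to replace an admissible sequence by one whose growth is slowed down to the form $n\gamma_n$ with a nonincreasing $\gamma_n$ while keeping $\sum_n\gamma_n/n$ finite; carrying out this monotonization/rescaling of the sequence is where the real work of the proof lies.
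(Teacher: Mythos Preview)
Your approach is exactly the paper's: use Young's inequality $a/\lambda\le\Phi(a)+\Psi(1/\lambda)$ to get $PBV_\Phi\subset P\Lambda BV$, then invoke Theorem~\ref{T1}(1) for $P\Lambda BV\subset HBV$. The paper's entire proof is two clauses---``$PBV_\Phi\subset P\Lambda BV$ under assumption~(\ref{Phi}), and $P\Lambda BV\subset HBV$ if (\ref{Lambda}) holds''---and it does \emph{not} carry out the monotonization/rescaling step you flag as ``where the real work lies.'' The authors evidently read the hypothesis ``$\{\lambda_n\}$ satisfying (1)'' as already placing the sequence in the setting of Theorem~\ref{T1} (i.e.\ $\lambda_n=n\gamma_n$ with $\gamma_n\downarrow$ and $\sum\gamma_n/n<\infty$), so from their point of view there is no obstacle. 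Your explicit construction $\lambda_n=n^{1/q}(\log n)^2$ for the $PBV_p$ case is correct and more detailed than anything in the paper; the general convexity argument you sketch at the end is extra work the paper simply does not attempt.
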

Indeed,
from the inequality $\frac{a}{\lambda}\leq \Phi(a)+\Psi( \frac{1}{\lambda})$ follows that
 $PBV_{\Phi}\subset P\Lambda BV$  under assumption (\ref{Phi}), and $P\Lambda BV\subset HBV$ if (\ref{Lambda}) holds.

\begin{definition}
[see \cite{GoEJA}] The partial modulus of variation of a function $f$ are
the functions $v_{1}\left( n,f\right)$ and
$ v_{2}\left( m,f\right) $ defined by
\begin{equation*}
v_{1}\left( n,f\right) :=\sup\limits_{y}\sup\limits_{\{I_{i}\}\in \Omega
_{n}}\sum\limits_{i=1}^{n}\left| f\left( I_{i},y\right) \right| ,\quad n=1,2,\ldots,
\end{equation*}
\begin{equation*}
v_{2}\left( m,f\right) :=\sup\limits_{x}\sup\limits_{\{J_{k}\}\in \Omega
_{m}}\sum\limits_{i=1}^{m}\left| f\left( x,J_{k}\right) \right| ,\quad m=1,2,\ldots.
\end{equation*}
\end{definition}

For functions of one variable  the concept
of the modulus variation was introduced by Chanturia \cite{Ch}.


\begin{theorem}
If $f\in B$ is bounded on $T^2$ and
\begin{equation*}
\sum\limits_{n=1}^{\infty }\frac{\sqrt{v_{j}\left( n,f\right) }}{n^{3/2}}%
<\infty ,\quad j=1,2,
\end{equation*}
then $f\in HBV.$
\end{theorem}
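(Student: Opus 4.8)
The plan is to show that the hypothesis on the partial moduli of variation forces $f$ to belong to $P\Lambda BV$ for a suitable sequence $\Lambda=\{\lambda_n\}$ satisfying both \eqref{Lambda} and the summability condition \eqref{T1-1} of Theorem \ref{T1}, and then to invoke Theorem \ref{T1}(1) to conclude $f\in P\Lambda BV\subset HBV$. The natural choice is dictated by a Cauchy--Schwarz/Abel argument: if one wants to bound $\sum_i |f(I_i,y)|/\lambda_i$ using only information about the sums $v_1(n,f)=\sup_y\sup_{\{I_i\}\in\Omega_n}\sum_{i=1}^n|f(I_i,y)|$, one orders the intervals so that $|f(I_1,y)|\ge|f(I_2,y)|\ge\cdots$ and uses that the $k$-th largest term is at most $v_1(k,f)/k$ (since the top $k$ terms sum to at most $v_1(k,f)$). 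Then $\sum_i |f(I_i,y)|/\lambda_i\le \sum_k v_1(k,f)/(k\lambda_k)$, and by Cauchy--Schwarz this is $\le \bigl(\sum_k v_1(k,f)/k^{3/2}\cdot k^{-1/2}\bigr)$-type expression; more precisely one sets $\lambda_k$ so that $k\lambda_k\asymp k^{3/2}/\sqrt{v_1(k,f)+v_2(k,f)}$ up to monotonicity corrections.

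Concretely, I would define $\mu_n:=\sqrt{n}\big/\bigl(\sqrt{v_1(n,f)}+\sqrt{v_2(n,f)}+1\bigr)$ (the $+1$ to stay away from $0$, and using boundedness of $f$ to control $v_j(n,f)=O(n)$ so that $\mu_n\gtrsim 1$), and then let $\lambda_n$ be a nondecreasing majorant of $\mu_n$, e.g. $\lambda_n:=\max\{2,\mu_1,\ldots,\mu_n\}$ or a smoothed version; one must check $\lambda_n\to\infty$, which follows because $\sum \sqrt{v_j(n,f)}/n^{3/2}<\infty$ forces $\sqrt{v_j(n,f)}/n^{1/2}=o(1)$ along a subsequence and in fact (with a monotonicity argument on $v_j$) $\mu_n\to\infty$. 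Then \eqref{T1-1} with $\gamma_n=\lambda_n/n$ reads $\sum \lambda_n/n^2<\infty$; since $\lambda_n\le C\sqrt{n}/(\sqrt{v_1(n,f)}+\sqrt{v_2(n,f)})$ would be false in general (we took a majorant, not a minorant), the cleaner route is to verify directly that $\sum_n \lambda_n/n^2<\infty$ using that $\lambda_n$ is the running maximum of the $\mu_k$ and a summation-by-parts estimate reducing it to $\sum_k \mu_k/k^2\asymp\sum_k 1/(k^{3/2}(\sqrt{v_1(k,f)}+\sqrt{v_2(k,f)}))\le\sum_k \sqrt{v_j(k,f)}/k^{3/2}<\infty$ after noting $1/(\sqrt a+\sqrt b)\le \min(1/\sqrt a,1/\sqrt b)$ — wait, that inequality goes the wrong way; instead use $v_j(n,f)\ge c$ for large $n$ (else $f$ is "eventually constant in one variable" on every finite configuration, an easy separate case) so $\mu_n\le C\sqrt n$ trivially and more sharply $\mu_n/n^2\le C/(n^{3/2}\sqrt{v_j(n,f)})$ for the variable $j$ achieving the larger modulus; summing the two choices $j=1,2$ gives convergence.

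Having fixed such a $\Lambda$, the remaining step is to check $f\in P\Lambda BV$, i.e. $\Lambda V_1(f)+\Lambda V_2(f)<\infty$. For $\Lambda V_1(f)$: fix $y$ and a collection $\{I_i\}\in\Omega$ ordered so that $|f(I_i,y)|$ is nonincreasing; since $\sum_{i=1}^k|f(I_i,y)|\le v_1(k,f)$ we get $|f(I_k,y)|\le v_1(k,f)/k$, hence by Lemma \ref{L1}-type rearrangement (the bound is largest when large $|f|$ meets small $\lambda$, which is exactly our ordering) $\sum_i |f(I_i,y)|/\lambda_i\le \sum_k v_1(k,f)/(k\lambda_k)$. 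With $\lambda_k\ge \mu_k=\sqrt k/(\sqrt{v_1(k,f)}+\cdots)$ we obtain $v_1(k,f)/(k\lambda_k)\le v_1(k,f)(\sqrt{v_1(k,f)}+\sqrt{v_2(k,f)}+1)/k^{3/2}$, and using $v_1(k,f)\le C k$ to linearize the awkward factor this is $\le C\sqrt{v_1(k,f)}/k^{1/2}\cdot$... — this still needs care, so the honest bound is $v_1(k,f)/(k\lambda_k)\le \sqrt{v_1(k,f)}/k^{1/2}\cdot \sqrt{v_1(k,f)}/(k^{1/2}\lambda_k)$ and $\sqrt{v_1(k,f)}/(k^{1/2}\lambda_k)\le 1$ by construction, leaving $\sum_k \sqrt{v_1(k,f)}/k^{3/2}\cdot\sqrt{v_1(k,f)/k}\le \sum_k\sqrt{v_1(k,f)}/k^{3/2}\cdot\sqrt C<\infty$. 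The same works for $\Lambda V_2(f)$ with $v_2$. Thus $f\in P\Lambda BV$, and Theorem \ref{T1}(1) — whose hypothesis $\sum\gamma_n/n=\sum\lambda_n/n^2<\infty$ we verified — yields $f\in HBV$.

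The main obstacle is the bookkeeping around the non-monotone quantities $v_j(n,f)$: the sequence $\mu_n=\sqrt n/(\sqrt{v_1(n,f)}+\sqrt{v_2(n,f)}+1)$ need not be monotone, yet Theorem \ref{T1} is stated for increasing $\Lambda$, and $v_j(\cdot,f)$ itself, while nondecreasing in $n$, can jump, so the passage "$\lambda_n=$ running max of $\mu_k$ still satisfies $\sum\lambda_n/n^2<\infty$" requires an Abel-summation argument that I expect to be the technical heart; everything else (the $|f(I_k,y)|\le v_k/k$ inequality, the rearrangement via Lemma \ref{L1}, the final Cauchy--Schwarz) is routine once the weight sequence is correctly set up. A secondary point to handle cleanly is degeneracy: if $v_1(n,f)$ or $v_2(n,f)$ fails to tend to infinity, $\mu_n$ need not tend to infinity, so one should split off the case where some $v_j(\cdot,f)$ is bounded (then $f$ has bounded partial variation in that direction in the ordinary Jordan sense, and the argument simplifies, indeed $\lambda_n$ can be taken slightly larger) from the generic case where both tend to infinity.
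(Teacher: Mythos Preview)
Your strategy --- choose $\Lambda$ so that $f\in P\Lambda BV$ and then invoke Theorem~\ref{T1}(1) --- is viable, but your execution has a genuine error: the weight you pick is essentially the reciprocal of the correct one. With $\mu_k=\sqrt{k}\big/(\sqrt{v_1(k,f)}+\sqrt{v_2(k,f)}+1)$ the estimate for $\Lambda V_1(f)$ breaks down. After reordering you correctly get $|f(I_k,y)|\le v_1(k,f)/k$, so you need $\sum_k v_1(k,f)/(k\lambda_k)<\infty$; but with $\lambda_k\asymp \sqrt{k}/\sqrt{v_1(k,f)}$ this is $\sum_k v_1(k,f)^{3/2}/k^{3/2}$, which need not converge (take $v_1(k,f)\asymp k/\log^4 k$: the hypothesis $\sum\sqrt{v_1}/k^{3/2}=\sum 1/(k\log^2 k)<\infty$ holds, yet $\sum v_1^{3/2}/k^{3/2}=\sum 1/\log^6 k=\infty$). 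Your algebra at the end reflects this: from $v_1/(k\lambda_k)=(\sqrt{v_1}/\sqrt{k})\cdot(\sqrt{v_1}/(\sqrt{k}\lambda_k))$ and the bound on the second factor, what is ``left'' is $\sum_k\sqrt{v_1(k,f)}/k^{1/2}$, not $\sum_k\sqrt{v_1(k,f)}/k^{3/2}$ as you wrote.

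The fix is to take $\lambda_k=\sqrt{k\bigl(v_1(k,f)+v_2(k,f)+1\bigr)}$. Then $\gamma_k=\lambda_k/k=\sqrt{(v_1+v_2+1)/k}$ is nonincreasing (since $v_j(k,f)/k$ is nonincreasing: removing the smallest of $k+1$ increments shows $v_j(k,f)/k\ge v_j(k+1,f)/(k+1)$), $\lambda_k$ is nondecreasing and tends to $\infty$, one has $\sum\gamma_k/k\le\sum(\sqrt{v_1}+\sqrt{v_2}+1)/k^{3/2}<\infty$, and $\sum_k v_j(k,f)/(k\lambda_k)\le\sum_k\sqrt{v_j(k,f)}/k^{3/2}<\infty$, so $f\in P\Lambda BV$ and Theorem~\ref{T1}(1) applies. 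With this choice all your monotonicity worries evaporate and no ``running max'' or case split is needed.

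For comparison, the paper does not go through Theorem~\ref{T1} at all. It bounds $HV_1(f)$, $HV_2(f)$ and $HV_{1,2}(f)$ directly: Abel summation for the one--index sums and a double Abel (Hardy) transformation for $HV_{1,2}$, together with the elementary inequality $\sum_{l\le i}\sum_{s\le j}|f(I_l,J_s)|\le 2\min\bigl(i\,v_2(j,f),\,j\,v_1(i,f)\bigr)$, which after taking the geometric mean gives the product $\sum_i\sqrt{v_1(i,f)}/i^{3/2}\cdot\sum_j\sqrt{v_2(j,f)}/j^{3/2}$. Your (corrected) route is shorter once Theorem~\ref{T1} is available, while the paper's direct computation is self--contained and makes the role of the mixed term $HV_{1,2}$ explicit.
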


\textbf{Proof.} Using Abel transformation we can write
\begin{eqnarray*}
\sum\limits_{k=1}^{m}\frac{|f\left( x,J_{k}\right) |}{k} &=&\sum%
\limits_{k=1}^{m-1}\left( \frac{1}{k}-\frac{1}{k+1}\right)
\sum\limits_{l=1}^{k}|f\left( x,J_{l}\right) |+\frac{1}{m}%
\sum\limits_{k=1}^{m}|f\left( x,J_{k}\right) \\
&\leq &\sum\limits_{k=1}^{m-1}\frac{1}{k^{2}}\left(
\sum\limits_{l=1}^{k}|f\left( x,J_{l}\right) |\right) ^{1/2}\left(
\sum\limits_{l=1}^{k}|f\left( x,J_{l}\right) |\right) ^{1/2}+c \\
&\leq &c\sum\limits_{k=1}^{m-1}\frac{\sqrt{k}}{k^{2}}\left(
\sum\limits_{l=1}^{k}|f\left( x,J_{l}\right) |\right) ^{1/2}+c \\
&\leq &c\sum\limits_{k=1}^{\infty }\frac{\sqrt{v_{2}\left( k,f\right) }}{%
k^{3/2}}+c\leq c<\infty .
\end{eqnarray*}
Consequently,
\begin{equation}\label {6}
HV_{2}\left( f\right) <\infty .
\end{equation}

Analogously, we can prove that
\begin{equation}
HV_{1}\left( f\right) <\infty .
\end{equation}

Using Hardy transformation we obtain
\begin{eqnarray}
&&\sum\limits_{i=1}^{n}\sum\limits_{j=1}^{m}\frac{|f\left(
I_{i},J_{j}\right) |}{ij} \notag \\
&=&\sum\limits_{i=1}^{n-1}\sum\limits_{j=1}^{m-1}\left( \frac{1}{i}-\frac{1}{%
i+1}\right) \left( \frac{1}{j}-\frac{1}{j+1}\right)
\sum\limits_{l=1}^{i}\sum\limits_{s=1}^{j}|f\left( I_{l},J_{s}\right) |\notag
\\
&&+\frac{1}{n}\sum\limits_{j=1}^{m-1}\left( \frac{1}{j}-\frac{1}{j+1}\right)
\sum\limits_{l=1}^{n}\sum\limits_{s=1}^{j}|f\left( I_{l},J_{s}\right) |
\\
&&+\frac{1}{m}\sum\limits_{i=1}^{n-1}\left( \frac{1}{j}-\frac{1}{j+1}\right)
\sum\limits_{l=1}^{i}\sum\limits_{s=1}^{m}|f\left( I_{l},J_{s}\right) |
\notag \\
&&+\frac{1}{nm}\sum\limits_{i=1}^{n}\sum\limits_{j=1}^{m}|f\left(
I_{i},J_{j}\right) |  \notag \\
&=&I+II+III+IV.  \notag
\end{eqnarray}

Since
\begin{equation*}
\sum\limits_{l=1}^{i}\sum\limits_{s=1}^{j}|f\left( I_{l},J_{s}\right) |\leq
2i\sup\limits_{x}\sum\limits_{s=1}^{j}|f\left( x,J_{s}\right) |\leq
2iv_{2}\left( j,f\right)
\end{equation*}
and
\begin{equation*}
\sum\limits_{l=1}^{i}\sum\limits_{s=1}^{j}|f\left( I_{l},J_{s}\right) |\leq
2j\sup\limits_{y}\sum\limits_{l=1}^{i}|f\left( I_{l},y\right) |\leq
2jv_{1}\left( i,f\right)
\end{equation*}
we can write
\begin{eqnarray}
I &\leq &\sum\limits_{i=1}^{n-1}\sum\limits_{j=1}^{m-1}\frac{1}{i^{2}j^{2}}%
\left( \sum\limits_{l=1}^{i}\sum\limits_{s=1}^{j}|f\left( I_{l},J_{s}\right)
|\right) ^{1/2}\left( \sum\limits_{l=1}^{i}\sum\limits_{s=1}^{j}|f\left(
I_{l},J_{s}\right) |\right) ^{1/2} \notag \\
&\leq &2\sum\limits_{i=1}^{n-1}\sum\limits_{j=1}^{m-1}\frac{\sqrt{%
ijv_{2}\left( j,f\right) v_{1}\left( i,f\right) }}{i^{2}j^{2}}  \\
&\leq &2\sum\limits_{i=1}^{\infty }\frac{\sqrt{v_{1}\left( i,f\right) }}{%
i^{3/2}}\sum\limits_{j=1}^{\infty }\frac{\sqrt{v_{2}\left( j,f\right) }}{%
j^{3/2}}<\infty ,  \notag
\end{eqnarray}
\begin{eqnarray}
II &\leq &\frac{1}{n}\sum\limits_{j=1}^{m-1}\frac{1}{j^{2}}\left(
\sum\limits_{l=1}^{n}\sum\limits_{s=1}^{j}|f\left( I_{l},J_{s}\right)
|\right) ^{1/2}\left( \sum\limits_{l=1}^{n}\sum\limits_{s=1}^{j}|f\left(
I_{l},J_{s}\right) |\right) ^{1/2}\notag \\
&\leq &\frac{1}{n}\sum\limits_{j=1}^{m-1}\frac{\sqrt{njv_{2}\left(
j,f\right) }}{j^{2}}   \\
&\leq &\frac{\sqrt{v_{1}\left( n,f\right) }}{\sqrt{n}}\sum\limits_{j=1}^{%
\infty }\frac{\sqrt{v_{2}\left( j,f\right) }}{j^{3/2}}\leq c<\infty
,n=1,2,...,  \notag
\end{eqnarray}
Analogously, we can prove that
\begin{equation}
III\leq c<\infty ,
\end{equation}
\begin{equation}\label {12}
IV\leq 2\sqrt{\frac{v_{1}\left( n,f\right) }{n}\frac{v_{2}\left( m,f\right)
}{m}}\leq c<\infty ,\,\,n,m=1,2,....
\end{equation}

Combining (\ref{6})-(\ref{12}) we conclude that $f\in HBV.$ Theorem 2 is proved.

\section{Convergence of two-dimensional trigonometric Fourier series}

Let $f\in L^{1}\left( T^{2}\right),\ T^{2}:=\left[ 0,2\pi \right]^2.$ The Fourier series of $f$ with respect to the
trigonometric system is the series
\begin{equation*}
S \left[ f\right] :=\sum_{m,n=-\infty }^{+\infty }\widehat{f}\left(
m,n\right) e^{imx}e^{iny},
\end{equation*}
where
\begin{equation*}
\widehat{f}\left( m,n\right) =\frac{1}{4\pi ^{2}}\int_{0}^{2\pi
}\int_{0}^{2\pi }f(x,y)e^{-imx}e^{-iny}dxdy
\end{equation*}
are the Fourier coefficients of the function $f$. The rectangular partial sums are defined as follows:
\begin{equation*}
S_{M,N} \left[ f, (x,y)\right] :=\sum_{m=-M }
^{M }
\sum_{n=-N }^N
\widehat{f}\left(
m,n\right) e^{imx}e^{iny},
\end{equation*}

In this paper we consider convergence of  {\bf  only rectangular partial sums} (convergence in the sense  of Pringsheim) of double Fourier series.

We denote by $C(T^{2})$ the space of continuous functions which are $2\pi $%
-periodic with respect to each variable with the norm
\begin{equation*}
\Vert f\Vert _{C}:=\sup_{x,y\in T^{2}}|f(x,y)|.
\end{equation*}

For the function $f$ defined on $T^2$ we denote by $f\left( x\pm 0,y\pm 0\right) $ the open coordinate quadrant limits (if exist) at the point $\left( x,y\right) $ and let $\frac14\sum f\left( x\pm0,y\pm0\right) $
be the arithmetic mean
\begin{equation}\label{limits}
\frac{1}{4}\{f\left( x+0,y+0\right) +f\left( x+0,y-0\right) +f\left(
x-0,y+0\right) +f\left( x-0,y-0\right) \}.
\end{equation}

The well known Dirichlet-Jordan theorem (see \cite{Zy}) states that the Fourier series of a function $f(x), \ x\in T$ of bounded variation converges
at every point $x$ to the value $\left[ f\left( x+0\right) +f\left(x-0\right) \right] /2$.
If $f$ is in addition continuous on $T$ the Fourier
series converges uniformly on $T$.

Hardy \cite{Ha} generalized the
Dirichlet-Jordan theorem to the double Fourier series. He proved that if  function $f(x,y)$ has bounded variation in the sense
of Hardy ($f\in BV$), then $S \left[ f\right] $ converges at any point $\left( x,y\right) $
to the value $\frac14\sum f\left( x\pm0,y\pm0\right) $. If $f$ is in addition continuous on
$T^{2}$ then $S \left[ f\right] $ converges uniformly on $T^{2}$.

\medskip
\textbf{Theorem S} (Sahakian \cite{Saha}).
{\it The Fourier series of a
function $f\left( x,y\right) \in HBV$ converges to $\frac14\sum f\left( x\pm0,y\pm0\right) $ at any point $\left( x,y\right) $, where the quadrant
limits (\ref{limits}) exist. The convergence is uniformly on any compact $K$, where
the function $f$ is continuous.}

\medskip
Theorem S was proved in \cite{Saha} under assumption that the function is continuous on some open set containing $K$
while Sargsyan noticed  in \cite {SG}, that the continuity of $f$ on the compact  $K$ is sufficient.

Analogs of Theorem S for higher dimensions can be found in \cite{Sab} and \cite {Bakh}.
Convergence of spherical and other partial sums of double Fourier series of functions of bounded $\Lambda$-variation was investigated in details by
Dyachenko (see \cite{D1}, \cite {D2} and references therein).

The first author \cite{GoEJA} has proved that if $f$ is
continuous function and has bounded partial $p$-variation $\left( f\in
PBV_{p}\right) $ for some $p\in [1,+\infty )$ then $S \left[ f\right] $ converges uniformly on $T^{2}$.
Moreover, the following is true

\textbf{Theorem G} ({Goginava} \cite{GoEJA}).
{\it Let $f\in C\left(
T^{2}\right) $ and
\begin{equation*}
\sum\limits_{n=1}^{\infty }\frac{\sqrt{v_{j}\left( n,f\right) }}{n^{3/2}}%
<\infty ,\,\,j=1,2.
\end{equation*}
Then $S \left[ f\right] $ converges uniformly on $T^{2}$.}
\medskip

Theorems 1, 2, Corollary 2 and Theorem S imply
\begin{theorem} \label{main}
Let $f\in P\Lambda BV$ with
\begin{equation*}
\sum\limits_{j=1}^{\infty }\frac{\lambda _{j}}{j^{2}}<\infty ,\quad \frac{%
\lambda _{j}}{j}\downarrow 0.
\end{equation*}
Then $S \left[ f\right] $ converges
to $\sum f\left( x,y\right) $ in any point $\left( x,y\right) $%
, where the quadrant limits (\ref{limits}) exist. The convergence is uniformly on any
compact $K$, where the function $f$ is continuous.
\end{theorem}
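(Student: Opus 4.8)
The plan is to deduce the statement from Theorem S after first checking that the hypotheses force $f\in HBV$. Concretely, I would set $\gamma_n:=\lambda_n/n$ for $n=1,2,\ldots$ and observe that the assumption $\lambda_j/j\downarrow 0$ means exactly that $\{\gamma_n\}$ is a positive nonincreasing sequence, $\gamma_n\ge\gamma_{n+1}>0$, so that $\Lambda=\{\lambda_n\}=\{n\gamma_n\}$ is of the form required in Theorem \ref{T1}. Since $\lambda_j/j^2=\gamma_j/j$, the assumption $\sum_j\lambda_j/j^2<\infty$ is literally condition (\ref{T1-1}). Hence part 1) of Theorem \ref{T1} applies and yields $P\Lambda BV\subset HBV$; in particular the given $f$ belongs to $HBV$.

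Once $f\in HBV$ has been established, I would simply invoke Theorem S: it gives that $S[f]$ converges at every point $(x,y)$ where the four quadrant limits exist, the limit being their arithmetic mean $\frac14\sum f(x\pm 0,y\pm 0)$, with uniform convergence on any compact set $K$ on which $f$ is continuous. That is precisely the assertion of Theorem \ref{main} (the symbol $\sum f(x,y)$ there being shorthand for the mean (\ref{limits})), so nothing more is needed.

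I do not expect a genuine obstacle in this argument: the real work has been front-loaded into the two results being composed. The embedding $P\Lambda BV\subset HBV$ is Theorem \ref{T1}(1), whose proof is the Hardy-type double Abel splitting of $\sum_{i,j}|f(I_i,J_j)|/(ij)$ into the ranges $i\le j$ and $i>j$, estimated by $\Lambda V_1(f)$, $\Lambda V_2(f)$ and the finiteness of $\sum_i\lambda_i/i^2$; and Theorem S (the $HBV$ analogue of the Dirichlet--Jordan theorem) is the deep ingredient. The only point that has to be handled with some care is the verification, in the first paragraph, that the sequence $\Lambda$ meets the hypotheses of Theorem \ref{T1}. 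For completeness I would remark that Theorem 2 and Corollary 2 provide the analogous embeddings into $HBV$ for classes defined through the partial moduli of variation $v_j(n,f)$ and through a Young function $\Phi$ (in particular $PBV_p$, $p>1$), so that the same two-step scheme — embed into $HBV$, then apply Theorem S — covers those settings too; but for Theorem \ref{main} exactly as stated only Theorem \ref{T1}(1) and Theorem S are used.
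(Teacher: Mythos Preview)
Your proposal is correct and matches the paper's own argument: the paper states that Theorem~\ref{main} follows from Theorem~\ref{T1} and Theorem~S, and your write-up makes explicit precisely the reduction $\gamma_n=\lambda_n/n$ needed to apply Theorem~\ref{T1}(1). Your closing remark that Theorem~2 and Corollary~2 are only used for the companion results (Theorems~4,~5 and Corollary~3), not for Theorem~\ref{main} itself, is also accurate.
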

\begin{theorem}
Let $f\in B$ and
\begin{equation*}
\sum\limits_{n=1}^{\infty }\frac{\sqrt{v_{j}\left( n,f\right) }}{n^{3/2}}%
<\infty ,\quad j=1,2.
\end{equation*}
Then $S \left[ f\right] $  converges
to $\frac14\sum f\left( x\pm0,y\pm0\right) $ in any point $\left( x,y\right) $%
, where the quadrant limits (\ref{limits}) exist. The convergence is uniformly on any
compact $K$, where the function $f$ is continuous.
\end{theorem}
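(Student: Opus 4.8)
The plan is to obtain the statement at once from Theorem~2 and Theorem~S, with essentially no additional work. First I would note that the hypotheses imposed here, namely that $f$ is bounded on $T^{2}$ and that
\[
\sum_{n=1}^{\infty}\frac{\sqrt{v_{j}(n,f)}}{n^{3/2}}<\infty ,\qquad j=1,2,
\]
are precisely the hypotheses of Theorem~2. Applying that theorem therefore gives $f\in HBV$.

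Having placed $f$ in $HBV$, I would then quote Theorem~S (Sahakian): it asserts that, for every function in $HBV$, the double Fourier series $S[f]$ converges to $\frac14\sum f(x\pm 0,y\pm 0)$ at each point $(x,y)$ at which the quadrant limits (\ref{limits}) exist, and that the convergence is uniform on any compact set on which $f$ is continuous. These are exactly the two conclusions claimed, so the proof would be complete at this stage.

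The only point that genuinely needs to be checked is hidden in the passage from the summability of the partial moduli of variation to the inclusion $f\in HBV$; but this is already handled in the proof of Theorem~2, where the bounds for $HV_{1}(f)$, $HV_{2}(f)$ and $HV_{1,2}(f)$ are produced by Abel and Hardy summation by parts using only the boundedness of $f$ and the hypothesis on $v_{1}(n,f)$ and $v_{2}(n,f)$ --- continuity is never invoked. Hence there is no real obstacle here: the theorem merely records the combination of the embedding Theorem~2 with the Dirichlet--Jordan-type Theorem~S, and in so doing strengthens Theorem~G by dropping the continuity assumption and localizing the convergence to the set where the quadrant limits exist.
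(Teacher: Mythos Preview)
Your proposal is correct and matches the paper's own argument: the paper simply records this theorem as an immediate consequence of Theorem~2 (the embedding into $HBV$) together with Theorem~S, exactly as you describe. No separate proof is given beyond the phrase ``Theorems 1, 2, Corollary 2 and Theorem S imply,'' so your reduction is precisely the intended one.
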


\begin{corollary}
Let $f\in B$ and $v_{1}\left( k,f\right) =O\left( k^{\alpha }\right)
,v_{2}\left( k,f\right) =O\left( k^{\beta }\right) ,\,0<\alpha ,\beta <1.$
Then $S \left[ f\right] $ converges
 to $\frac14\sum f\left(x\pm0,y\pm0\right) $ in any point $\left( x,y\right) $%
, where the quadrant limits (\ref{limits}) exist. The convergence is uniformly on any
compact $K$, where the function $f$ is continuous.
\end{corollary}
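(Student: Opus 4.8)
The plan is to read off the statement directly from Theorem 4. Once one knows that the hypotheses $v_1(k,f)=O(k^{\alpha})$ and $v_2(k,f)=O(k^{\beta})$ with $0<\alpha,\beta<1$ imply the convergence of the two series $\sum_{n=1}^{\infty}\sqrt{v_j(n,f)}/n^{3/2}$, $j=1,2$, Theorem 4 yields both assertions at once; so the entire argument reduces to a short series comparison.

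First I would fix constants $C_1,C_2$ with $v_1(n,f)\le C_1 n^{\alpha}$ and $v_2(n,f)\le C_2 n^{\beta}$ for every $n$. Then for $j=1$ one has $\sqrt{v_1(n,f)}/n^{3/2}\le \sqrt{C_1}\,n^{\alpha/2-3/2}$, and since $\alpha<1$ the exponent $\alpha/2-3/2$ is strictly less than $-1$; hence the series is dominated by the convergent $p$-series with $p=3/2-\alpha/2>1$. The identical computation with $\beta$ in place of $\alpha$ disposes of the case $j=2$. Having thus verified the hypothesis of Theorem 4 — boundedness of $f$ being already part of the assumption $f\in B$ — I would simply invoke that theorem to obtain convergence of $S[f]$ to $\frac14\sum f(x\pm0,y\pm0)$ at every point where the quadrant limits exist, together with uniform convergence on any compact set on which $f$ is continuous.

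There is no genuine obstacle here; the proof is a one-line estimate followed by citation of Theorem 4. The only point worth flagging in the write-up is that the strict inequalities $\alpha<1$ and $\beta<1$ are used in an essential way: at $\alpha=1$ (or $\beta=1$) the corresponding series degenerates to the harmonic series and Theorem 4 no longer applies, which is consistent with the known sharpness of the Chanturia-type condition $\sum_{n}\sqrt{v_j(n,f)}/n^{3/2}<\infty$.
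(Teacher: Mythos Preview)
Your proposal is correct and matches the paper's approach exactly: the corollary is stated immediately after Theorem~4 with no separate proof, precisely because $v_j(n,f)=O(n^{\gamma})$ with $\gamma<1$ gives $\sqrt{v_j(n,f)}/n^{3/2}=O(n^{-(3-\gamma)/2})$, a convergent $p$-series, so Theorem~4 applies directly.
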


\begin{theorem}
Let $f\in PBV_{p},\,p\geq 1$. Then $S \left[ f\right] $ converges to $\frac14\sum f\left( x\pm0,y\pm0\right) $ in any point $%
\left( x,y\right) $, where the quadrant limits in (\ref{limits}) exist. The convergence
is uniformly on any compact $K$, where the function $f$ is continuous.
\end{theorem}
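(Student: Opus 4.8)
\medskip
\noindent\textbf{Proof (sketch).} The plan is to reduce the statement to Theorem~S by establishing the inclusion $PBV_p\subset HBV$ for every $p\ge 1$, and then to invoke Theorem~S to get the convergence to $\tfrac14\sum f(x\pm0,y\pm0)$ at every point where the quadrant limits exist, together with the uniform convergence on compacta of continuity. For $p>1$ this inclusion is exactly Corollary~2, and for $p=1$ it is Corollary~1 (since $PBV_1=PBV$); so at the bookkeeping level the proof is just ``Corollaries~1 and~2 plus Theorem~S''. It is worth recalling how the inclusion arises, since that is where the only real content sits.

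Assume first $p>1$ and let $p'=p/(p-1)$. Choose $\alpha$ with $1-\tfrac1p<\alpha<1$ --- such $\alpha$ exists precisely because $p>1$ --- and set $\lambda_n=n^{\alpha}$, so that $\{\lambda_n\}$ is increasing, $\lambda_n\to\infty$, and $\alpha p'>1$. By Young's inequality $\dfrac{a}{\lambda}\le \dfrac{a^{p}}{p}+\dfrac{1}{p'\lambda^{p'}}$, hence for every $y$ and every collection $\{I_i\}$ of nonoverlapping intervals,
$$
\sum_i\frac{|f(I_i,y)|}{\lambda_i}\le \frac1p\sum_i|f(I_i,y)|^{p}+\frac1{p'}\sum_{n=1}^{\infty}\frac{1}{n^{\alpha p'}}\le \frac1p\,V^{(1)}_{\Phi}(f)+C<\infty ,
$$
with $\Phi(u)=u^{p}$, and similarly for $\Lambda V_2(f)$; thus $f\in P\Lambda BV$. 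Writing $\gamma_n=\lambda_n/n=n^{\alpha-1}$ we have $\gamma_n\downarrow 0$ and $\sum_n\gamma_n/n=\sum_n n^{\alpha-2}<\infty$, so condition (\ref{T1-1}) of Theorem~\ref{T1}(1), hence the hypotheses of Theorem~\ref{main}, are satisfied. Therefore $f\in HBV$ and Theorem~S applies.

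For $p=1$ one simply notes $PBV_1=PBV$ and applies the case $\lambda_n\equiv 1$ of Theorem~\ref{T1}(1), i.e.\ Corollary~1, to conclude $PBV\subset HBV$; then Theorem~S again gives the conclusion. I do not expect a genuine obstacle here: all the substance is already in Theorems~\ref{T1}, \ref{main} and~S, and the only mild points needing care are (i) that the exponent $\alpha$ must lie strictly between $1-\tfrac1p$ and $1$, which is possible exactly when $p>1$, and (ii) that the endpoint $p=1$ must be treated through the direct inclusion $PBV\subset HBV$ rather than by exhibiting a sequence $\Lambda$ satisfying (\ref{Lambda}) that dominates $PBV$, since no such sequence exists.
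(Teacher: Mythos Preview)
Your proposal is correct and follows essentially the same route as the paper: the paper derives this theorem directly from Corollaries~1 and~2 (i.e.\ the inclusion $PBV_p\subset HBV$ obtained via Theorem~\ref{T1}(1)) together with Theorem~S, and you reproduce exactly this chain, supplying the explicit choice $\lambda_n=n^{\alpha}$ with $1-\tfrac1p<\alpha<1$ that makes Corollary~2 work for $\Phi(u)=u^{p}$.
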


From Theorem \ref{main} follows that for any $\delta>0$ the  Fourier series of the function  $f\in P\left\{\frac n {\log^{1+\delta}n}\right\}BV$
converges to $\frac14\sum f\left( x\pm0,y\pm0\right) $  in any point $(x,y)$, where the quadrant limits  (\ref{limits}) exist. Moreover, one can not take here $\delta =0$ (see Theorem \ref{T6}).
It is interesting to compare this result with that obtained by M. Dyachenko and D. Waterman in \cite {DW}.

Dyachenko and Waterman \cite{DW} introduced another class of functions of generalized bounded variation. Denoting by $\Gamma $ the the set of finite collections of nonoverlapping rectangles $A_{k}:=\left[ \alpha _{k},\beta _{k}\right] \times \left[ \gamma _{k},\delta _{k}\right] \subset T^2$ we define
$$\Lambda ^{\ast }V\left( f\right) :=\sup_{\{A_k\}\in\Gamma }\sum\limits_{k}
\frac{\left\vert f\left( A_{k}\right) \right\vert }{\lambda _{k}}.
$$

\begin{definition}[Dyachenko, Waterman]
Let $f$ be a real function on $T^2:=\left[ 0,2\pi\right] \times \left[ 0,2\pi%
\right] $. We say that $f\in \Lambda ^{\ast }BV $ if
\begin{equation*}
\Lambda V(f):=\Lambda V_{1}(f)+\Lambda V_{2}(f)+\Lambda ^{\ast }V\left( f\right)<\infty .
\end{equation*}
\end{definition}
\begin{TDW}[\cite{DW}]
\label{TDW}
If $f\in \left\{\frac n {\log n}\right\}^*BV$,
then in any point $(x,y)$ the quadrant limits (\ref{limits}) exist and the double Fourier series of $f$ converges  to $\frac14\sum f\left(x\pm0,y\pm0\right) $.\\
Moreover, the sequence $\left\{\frac n {\log n}\right\}$ can not be replaced with any sequence  $\left\{\frac {n\alpha_n} {\log n}\right\}$, where $\alpha_n\to \infty$.
\end{TDW}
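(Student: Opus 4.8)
The plan is to establish the statement in two parts: the convergence assertion for $f\in\{n/\log n\}^{*}BV$, and the sharpness claim that $\{n/\log n\}$ cannot be enlarged to $\{n\alpha_n/\log n\}$ with $\alpha_n\to\infty$. For the convergence part I would write the rectangular partial sum as the convolution of $f$ with the product Dirichlet kernel and, using the evenness of $D_M$, reduce to showing that for each of the four quadrant functions $\varphi(s,t):=f(x\pm s,y\pm t)-f(x\pm0,y\pm0)$, $s,t>0$, one has $\pi^{-2}\int_0^\pi\int_0^\pi\varphi(s,t)D_M(s)D_N(t)\,ds\,dt\to0$ as $M,N\to\infty$. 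Before this can be stated one must know the quadrant limits exist: I would deduce this from $\Lambda V_1(f),\Lambda V_2(f)<\infty$ (which force the one–variable sections of $f$ to lie in $\Lambda BV$, hence, since $\sum 1/\lambda_n=\sum(\log n)/n=\infty$ for $\lambda_n=n/\log n$, to have one–sided limits in each variable separately) together with $\Lambda^{*}V(f)<\infty$, which rules out the two iterated one–sided limits disagreeing — a disagreement would produce nonoverlapping rectangles $A_k$ with $\sum_k|f(A_k)|/\lambda_k=\infty$.

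Next I would partition $[0,\pi]^2$ into the corner $Q_0=[0,1/M]\times[0,1/N]$, the strips $Q_1=[0,1/M]\times[1/N,\pi]$ and $Q_2=[1/M,\pi]\times[0,1/N]$, and the main square $Q_3=[1/M,\pi]\times[1/N,\pi]$. On $Q_0$ one uses $|D_M|\le cM$, $|D_N|\le cN$ and $\sup_{Q_0}|\varphi|=o(1)$. On $Q_1$ (symmetrically $Q_2$) one writes $\varphi(s,t)=\bigl[\varphi(s,t)-\varphi(0+,t)\bigr]+\varphi(0+,t)$; here $t\mapsto\varphi(0+,t)$ has $\Lambda$-variation at most $\Lambda V_2(f)$ and vanishes at $t=0+$, while $t\mapsto\varphi(s,t)-\varphi(0+,t)=f([x,x+s],y+t)$ has $\Lambda$-variation at most $\Lambda^{*}V(f)$ uniformly in $s$ (its increments over intervals $J$ are values $f(A)$ on the rectangles $[x,x+s]\times(y+J)$) and is small at $t=0+$; applying the one–dimensional Waterman estimate in $t$ over $[1/N,\pi]$ and then integrating $D_M$ over $[0,1/M]$, where $\int_0^{1/M}D_M(s)\,ds=\int_0^1\frac{\sin u}{u}\,du+o(1)=O(1)$, yields $o(1)$. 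The crux is $Q_3$: iterating the one–dimensional estimate fails there, because the $\log N$ growth of $\|D_N\|_{L^1[1/M,\pi]}$ is uncontrolled. Instead I would carry out a genuinely two–dimensional summation by parts on a grid of Waterman intervals $I_i\times J_j$ covering $Q_3$, turning the integral into $\sum_{i,j}f(A_{ij})c_{ij}+\text{(boundary terms)}$ with $|c_{ij}|\le c/(ij)$; since the $A_{ij}$ are nonoverlapping, Lemma \ref{L1} gives $\sum_{i,j}|f(A_{ij})|\,|c_{ij}|\le c\,\Lambda^{*}V(f)\cdot\sup_\nu(\lambda_\nu w_{(\nu)})$, where $w_{(\nu)}$ is the nonincreasing rearrangement of $\{|c_{ij}|\}$. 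Because $\#\{(i,j):ij\le t\}\sim t\log t$ one has $w_{(\nu)}\asymp(\log\nu)/\nu$, so $\lambda_\nu w_{(\nu)}=O(1)$ \emph{precisely} when $\lambda_\nu=\nu/\log\nu$; a further split of $Q_3$ into a near–corner part (where $\varphi$ is uniformly small and is multiplied by only $O(1)$ worth of kernel mass) and a complementary part (where $|c_{ij}|$ is already small) upgrades this $O(1)$ bound to $o(1)$. This two–dimensional kernel/rearrangement estimate, and its razor–sharp dependence on the growth of $\lambda_n$, is the main obstacle.

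For the sharpness part I would, given $\alpha_n\uparrow\infty$, construct a genuine (e.g. piecewise constant on a grid of shrinking squares about the points $(1/i,1/j)$) function $f$ in the spirit of the counterexample in the proof of part 2) of Theorem \ref{T1}: masses $t_j$ placed at scales $j<i\le j+m_j$, with $m_j,t_j$ tuned so that, on one hand, every finite family of nonoverlapping rectangles sees the scale-$j$ masses through at most $\sim m_j$ of its members, which by the rearrangement inequality together with the fact that $\lambda_n=n\alpha_n/\log n$ is a \emph{weaker} weight than $n/\log n$ keeps $\Lambda V_1(f),\Lambda V_2(f),\Lambda^{*}V(f)$ all finite, so $f\in\{n\alpha_n/\log n\}^{*}BV$; and on the other hand the very computation that closes the proof of Theorem \ref{T1} — now with $D_i(s)\approx\sin((i+\tfrac12)s)/s$ entering — forces a lower bound for $S_{i,i}[f,(0,0)]$ of the shape $c\sum_j(t_j/j)\log(j+m_j)$, which, by taking the scales lacunarily and $m_j$ growing slowly relative to $\alpha_j$, can be made to diverge; hence the Fourier series of $f$ does not converge to $\tfrac14\sum f(x\pm0,y\pm0)$ at the origin. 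The delicate point is to fit $m_j$ so that $\Lambda^{*}V(f)$ stays finite while divergence is forced simultaneously for every prescribed $\alpha_n\to\infty$; the arithmetic $\lambda_\nu w_{(\nu)}=\alpha_\nu\to\infty$, in contrast to the $O(1)$ of the sufficiency proof, is exactly what singles out $\{n/\log n\}$ as the threshold sequence.
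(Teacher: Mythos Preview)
The paper does not prove Theorem~DW; it is quoted from \cite{DW}. The only argument the paper supplies is the one-line remark immediately following the statement: $\{n/\log n\}^{*}BV\subset HBV$ (easy; see \cite{DW}), so the convergence part follows at once from Theorem~S. Neither the existence of quadrant limits nor the sharpness assertion is argued in the paper.

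Your proposal is a direct attack on the Dirichlet integral and is correct in outline, but it duplicates effort. The rearrangement computation you perform on $Q_3$ --- that the decreasing rearrangement of $\{1/(ij)\}$ satisfies $w_{(\nu)}\asymp(\log\nu)/\nu$, so $\lambda_\nu w_{(\nu)}=O(1)$ exactly when $\lambda_\nu=\nu/\log\nu$ --- is precisely the calculation that proves the inclusion $\{n/\log n\}^{*}BV\subset HBV$: applied to any grid $\{I_i\times J_j\}$ of nonoverlapping rectangles it yields $\sum_{i,j}|f(I_i,J_j)|/(ij)\le C\,\Lambda^{*}V(f)$, i.e.\ $HV_{1,2}(f)<\infty$, while $HV_1(f),HV_2(f)<\infty$ is trivial since $n/\log n\le n$. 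Once that inclusion is established, your entire $Q_0$--$Q_3$ decomposition and the ``$O(1)$ to $o(1)$'' upgrade are absorbed into Theorem~S, which already handles the Dirichlet-kernel analysis for arbitrary $f\in HBV$. In short, the paper's route is: (i) do your $Q_3$ rearrangement once, abstractly, to get the class inclusion; (ii) invoke Theorem~S. Your route effectively re-derives Theorem~S inside the proof. Both are valid; the paper's is considerably shorter.

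For the sharpness half neither you nor the paper gives a proof; the paper simply attributes it to \cite{DW}. Your sketch is in the right spirit, but note that controlling $\Lambda^{*}V(f)$ (not just $\Lambda V_1,\Lambda V_2$) for a Theorem~\ref{T1}-type construction needs an extra argument you have not supplied: arbitrary nonoverlapping \emph{rectangles} $A_k$ can straddle several scales $j$ at once, and the rearrangement inequality alone does not obviously keep $\sum_k|f(A_k)|/\lambda_k$ bounded.
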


It is easy to show  {\rm(see\cite{DW})}, that
$\left\{\frac n {\log n}\right\}^*BV\subset HBV$, hence the convergence part of Theorem DW follows from Theorem S. It is essential that the condition $f\in \left\{\frac n {\log n}\right\}^*BV$ guaranties the  existence of quadrant limits.

The next theorem in particular shows that in Theorem S the condition
$HV_{1,2}(f)<\infty$ is necessary, i.e the boundedness of partial harmonic variation is not sufficient for the convergence of Fourier series of continuous function.
\begin{theorem}\label{T6}
Let $\Lambda =\{\lambda _{n}=n\gamma_n\}$ where $\gamma_{n}$ is an decreasing sequence satisfying
(\ref{T1-10}) and (\ref{T1-11}).
Then there exists a continuous function $f\in P\Lambda BV$ with Fourier series that diverges at $(0,0)$.
\end{theorem}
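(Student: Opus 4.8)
The idea is to build $f$ as a lacunary sum of two-dimensional ``Fejér blocks''. Start from the classical one-dimensional Fejér polynomial
$$
Q_n(x):=2\sin nx\sum_{l=1}^{n}\frac{\sin lx}{l}=\sum_{l=1}^{n}\frac{\cos((n-l)x)-\cos((n+l)x)}{l}.
$$
Its basic properties are standard and I would record them first: $\|Q_n\|_{C}\le c_0$ uniformly in $n$ (since the partial sums of $\sum_l\frac{\sin l\cdot}{l}$ are uniformly bounded), $Q_n(0)=0$, the spectrum of $Q_n$ lies in $[-2n,2n]$, and $S_n[Q_n](0)=\sum_{l=1}^{n}\frac1l=H_n\ge c\log n$ (the frequencies $n-l$ survive, the frequencies $n+l$ are discarded). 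Next I would estimate its $\Lambda$-variation: $Q_n$ has $\asymp n$ monotone arcs whose heights, rearranged in decreasing order, are $\lesssim 1$, so
$$
\Lambda V(Q_n)\le C\sum_{i=1}^{2n}\frac1{\lambda_i}=C\sum_{i=1}^{2n}\frac1{i\gamma_i}\le C'\frac{\log n}{\gamma_n},
$$
the last inequality being exactly the computation used in the proof of Theorem~\ref{T1}, which is where hypothesis (\ref{T1-10}) enters. Put $\mathcal P_n(x,y):=Q_n(x)Q_n(y)$; then $\|\mathcal P_n\|_{C}\le c_0^{2}$, $\mathcal P_n(0,0)=0$, the spectrum of $\mathcal P_n$ is in $[-2n,2n]^{2}$, $S_{n,n}[\mathcal P_n](0,0)=\bigl(S_n[Q_n](0)\bigr)^{2}=H_n^{2}\ge c(\log n)^{2}$, and, because for fixed $y$ the map $x\mapsto Q_n(x)Q_n(y)$ is just the scalar $Q_n(y)$ times the fixed function $Q_n$, $\Lambda V_{1}(\mathcal P_n)=\|Q_n\|_{C}\,\Lambda V(Q_n)\le C\frac{\log n}{\gamma_n}$, and likewise $\Lambda V_{2}(\mathcal P_n)\le C\frac{\log n}{\gamma_n}$.

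Now I would choose rapidly increasing integers $n_1<n_2<\cdots$ (say $n_{\nu+1}>n_\nu^{2}$) and positive weights $\varepsilon_\nu\downarrow 0$, and set
$$
f(x,y):=\sum_{\nu=1}^{\infty}\varepsilon_\nu\,\mathcal P_{n_\nu}(x,y).
$$
The weights must be chosen so that simultaneously: (i) $\sum_\nu\varepsilon_\nu<\infty$ — then the series converges uniformly, so $f\in C(T^{2})$ and $f(0,0)=0$ (hence the quadrant limits at $(0,0)$ trivially exist and equal $0$); (ii) $\sum_\nu\varepsilon_\nu\,\dfrac{\log n_\nu}{\gamma_{n_\nu}}<\infty$ — then $\Lambda V_{1}(f)+\Lambda V_{2}(f)\le C\sum_\nu\varepsilon_\nu\frac{\log n_\nu}{\gamma_{n_\nu}}<\infty$, i.e.\ $f\in P\Lambda BV$; (iii) $\varepsilon_\nu(\log n_\nu)^{2}\to\infty$, which will force divergence. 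The feasibility of (i)--(iii) is precisely the point at which the divergence of $\sum\gamma_n/n$ must be invoked: if (\ref{T1-11}) fails, then $P\Lambda BV\subset HBV$ by Theorem~\ref{T1} and the series converges by Theorem~S, so no such $f$ can exist, and (\ref{T1-10}) is needed to guarantee that $\gamma_{n_\nu}$ is regular enough for the balance to be struck (one sets $t_\nu:=\varepsilon_\nu(\log n_\nu)^{2}$ and lets $\log n_\nu$ grow fast enough against $1/\gamma_{n_\nu}$ that $\sum_\nu\frac{t_\nu}{\gamma_{n_\nu}\log n_\nu}<\infty$ and $\sum_\nu\frac{t_\nu}{(\log n_\nu)^{2}}<\infty$ while $t_\nu\to\infty$).

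Finally I would verify divergence at the origin. Because the blocks have separated spectra,
$$
S_{n_\nu,n_\nu}[f](0,0)=\sum_{\mu=1}^{\infty}\varepsilon_\mu\,S_{n_\nu,n_\nu}[\mathcal P_{n_\mu}](0,0).
$$
For $\mu<\nu$ the spectrum of $\mathcal P_{n_\mu}$ lies in $[-2n_\mu,2n_\mu]^{2}\subset[-n_\nu,n_\nu]^{2}$ (since $n_\nu>n_{\nu-1}^{2}\ge 2n_\mu$), so $S_{n_\nu,n_\nu}[\mathcal P_{n_\mu}](0,0)=\mathcal P_{n_\mu}(0,0)=0$. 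For $\mu>\nu$ one has $S_{n_\nu}[Q_{n_\mu}](0)=\sum_{l=n_\mu-n_\nu}^{n_\mu}\frac1l\le\frac{2n_\nu}{n_\mu}$, hence $|S_{n_\nu,n_\nu}[\mathcal P_{n_\mu}](0,0)|\le 4n_\nu^{2}/n_\mu^{2}$ and $\sum_{\mu>\nu}\varepsilon_\mu\,4n_\nu^{2}/n_\mu^{2}\le C\,n_\nu^{2}/n_{\nu+1}^{2}\to 0$ by the growth of $n_\nu$. Therefore
$$
S_{n_\nu,n_\nu}[f](0,0)=\varepsilon_\nu H_{n_\nu}^{2}+o(1)\ge c\,\varepsilon_\nu(\log n_\nu)^{2}+o(1)\longrightarrow\infty ,
$$
so the rectangular partial sums of $S[f]$ do not converge at $(0,0)$, while $f\in C(T^{2})\cap P\Lambda BV$, which proves the theorem.

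\textbf{Main obstacle.} The delicate step is the parameter balance (ii)--(iii): the tensor block $Q_n\otimes Q_n$ costs $\asymp\log n/\gamma_n$ in each partial variation but yields a Fourier gain only $\asymp(\log n)^{2}$, so (i)--(iii) are easily compatible when $\gamma_n\log n\to\infty$ (in particular for $\gamma_n\equiv1$), but become tight when $\gamma_n$ decays as slowly as $1/\log n$. Carrying the argument through at the full generality permitted by (\ref{T1-10})--(\ref{T1-11}) will most likely require either a sharper estimate of $\Lambda V(Q_n)$ or, more plausibly, replacing the tensor block by a genuinely two-dimensional polynomial whose one-dimensional sections have smaller $\Lambda$-variation than $Q_n$ while still producing a Fourier partial sum of order $(\log n)^{2}$ at the origin; constructing such a block, and controlling its partial variations, is the real work of the proof.
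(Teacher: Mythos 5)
Your construction is internally consistent, but it does not prove the theorem in the stated generality, and the gap is exactly the one you flag at the end — it is not a technicality that can be absorbed by tuning parameters. For the tensor block $\mathcal P_n=Q_n\otimes Q_n$ the cost in partial variation is $\Lambda V_1(\mathcal P_n)+\Lambda V_2(\mathcal P_n)\asymp\log n/\gamma_n$ while the gain at the origin is $S_{n,n}[\mathcal P_n](0,0)\asymp(\log n)^2$. Your conditions (ii) and (iii) force $\varepsilon_\nu\log n_\nu/\gamma_{n_\nu}\to0$ and $\varepsilon_\nu(\log n_\nu)^2\to\infty$ simultaneously, which is possible only if $\gamma_{n_\nu}\log n_\nu\to\infty$, i.e.\ only if $\limsup_n\gamma_n\log n=\infty$. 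That is an extra hypothesis that neither follows from nor implies (\ref{T1-10})--(\ref{T1-11}). In the critical case $\gamma_n=1/\log n$ (so $\lambda_n=n/\log n$), which satisfies both hypotheses and is precisely the case Theorem \ref{T6} is invoked for in the paper (to show that one cannot take $\delta=0$ in $P\{n/\log^{1+\delta}n\}BV$, and to contrast with Theorem DW), one has $\gamma_n\log n\equiv1$: cost and gain are both $\asymp(\log n)^2$, and no choice of $\varepsilon_\nu,n_\nu$ works. So your argument proves the theorem only under the additional assumption $\limsup\gamma_n\log n=\infty$, and the divergence of $\sum\gamma_n/n$ is never actually used in your balance — contrary to what the plan asserts.

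The paper's proof supplies exactly the ``genuinely two-dimensional block'' you say is needed. It takes
$f_N(x,y)=\sum_{(i,j)\in W}t_j\,\chi_{A_{i,j}}(x,y)\sin(N+\frac12)x\,\sin(N+\frac12)y$,
where the $A_{i,j}$ are squares of side $\pi/(N+1/2)$, $W=\{(i,j):j<i<j+m_j,\ 1\le j<N_\delta\}$ is the same staircase index set as in the counterexample of Theorem \ref{T1}(2), and $t_j=(\sum_{i\le m_j}1/\lambda_i)^{-1}$. Each horizontal section meets $m_j$ squares of equal amplitude $t_j$, and each vertical section meets squares with decreasing amplitudes, so Lemma \ref{Lem} and the rearrangement Lemma \ref{L1} give $\Lambda V_1(f_N),\Lambda V_2(f_N)\le1$ uniformly in $N$ — a cost of $O(1)$ instead of $\log N/\gamma_N$ — while the square partial sum satisfies $S_{N,N}[f_N,(0,0)]\ge c\sum_{j\le N_\delta}\gamma_j/j\to\infty$ directly from (\ref{T1-11}). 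Banach--Steinhaus in the Banach space $C(T^2)\cap P\Lambda BV$ then replaces your explicit lacunary sum; that packaging difference is immaterial. To salvage your scheme you would have to replace $Q_n\otimes Q_n$ by this staircase block (or an equivalent one whose sections have $O(1)$ partial $\Lambda$-variation); constructing and estimating it is indeed, as you suspected, the real content of the proof.
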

We need the following simple lemma that easily follows from Lemma \ref{L1}.
\begin {lemma}\label{Lem}
Let the function $g(t)$ be defined on $T$ and
$$
0=t_1<t_2<\ldots<t_{2m}=2\pi.
$$
Suppose $g$ is increasing on $[t_i,t_{i+1}]$
if $\,i$ is odd and is decreasing, if $\,i$ is even. If
$$
\left|g(t_{i+1})-g(t_{i})\right|>
\left|g(t_{i+2})-g(t_{i+1})\right|, \quad i=1,2,\ldots,2m-2,
$$ then
$$
\Lambda BV(g)=\sum_{i=1}^{2m-1}\frac{\left|g(t_{i+1})-g(t_{i})\right|}
{\lambda_i},
$$
for any sequence $\Lambda=\{\lambda_n\}$ satisfying (\ref{Lambda}).
\end {lemma}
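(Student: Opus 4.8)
The plan is to compute the supremum defining $\Lambda BV(g)$ directly, identifying the extremal collection of intervals as the monotone pieces $(t_i,t_{i+1})$ and the extremal ordering via Lemma~\ref{L1}. Throughout write $a_i:=|g(t_{i+1})-g(t_i)|$, so that the hypothesis reads $a_1>a_2>\cdots>a_{2m-1}>0$, and recall that $\Lambda BV(g)=\sup_E\sum_i |g(I_i)|/\lambda_i$, the supremum taken over all finite collections $E=\{I_i\}$ of nonoverlapping intervals together with all orderings of them. The lower bound is immediate: taking $E^\ast=\{(t_i,t_{i+1})\}_{i=1}^{2m-1}$ in its natural order and using that $a_i$ and $1/\lambda_i$ are both decreasing in $i$ gives $\sum_{i=1}^{2m-1}a_i/\lambda_i$, so $\Lambda BV(g)\ge \sum_{i=1}^{2m-1}a_i/\lambda_i$. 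It remains to prove the reverse inequality.

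Fix an arbitrary finite collection $E=\{I_k\}_{k=1}^N$ and let $b_1\ge b_2\ge\cdots\ge b_N$ denote its increments $|g(I_k)|$ arranged in decreasing order. Since the weights $1/\lambda_1\ge 1/\lambda_2\ge\cdots$ are also decreasing by \eqref{Lambda}, Lemma~\ref{L1} applied with $u_k=b_k$ and $v_k=1/\lambda_k$ shows that, among all orderings of $E$, the weighted sum $\sum_k |g(I_k)|/\lambda_{\sigma(k)}$ is largest when the largest increment is paired with $1/\lambda_1$, the next with $1/\lambda_2$, and so on. Hence it suffices to bound $\sum_{k=1}^N b_k/\lambda_k$, and the whole problem reduces to the submajorization estimate
$$
\sum_{k=1}^{p} b_k\le \sum_{i=1}^{p} a_i\qquad(p=1,2,\dots),
$$
with the convention $a_i=0$ for $i>2m-1$.

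The key and most delicate step is this submajorization. Its engine is the tail–oscillation identity
$$
\operatorname*{osc}_{[t_c,2\pi]} g = a_c,\qquad c=1,\dots,2m-1 ,
$$
which follows from the amplitude hypothesis: because $a_i$ is decreasing, the successive local maxima $g(t_2),g(t_4),\dots$ strictly decrease and the successive local minima $g(t_1),g(t_3),\dots$ strictly increase, so on any tail $[t_c,2\pi]$ both the maximum and the minimum of $g$ are attained at its first two nodes, their difference being exactly $a_c$. Consequently, if an interval $I=(\alpha,\beta)$ has its left endpoint in the $c$-th monotone piece, then $|g(I)|\le \operatorname*{osc}_{[t_c,2\pi]}g=a_c$. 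Ordering the $p$ intervals carrying $b_1,\dots,b_p$ from left to right, their left endpoints fall into pieces with weakly increasing indices $c_1\le c_2\le\cdots\le c_p$; grouping together the intervals that share a common piece index and using that disjoint subintervals of a single monotone piece have increments summing to at most the increment $a_c$ of that piece, one converts the bounds $b_j\le a_{c_j}$ into the required inequality $\sum_{j\le p}b_j\le\sum_{i\le p}a_i$. I expect this bookkeeping—turning the weakly increasing assignment $c_j$ into the clean partial-sum bound while accounting for several intervals lying inside one piece—to be the main obstacle; the amplitude hypothesis $a_i>a_{i+1}$ is exactly what makes the tail oscillations collapse to single pieces and drives the argument.

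Finally, I would pass from the submajorization to the stated formula by Abel summation. Writing $B_p=\sum_{k\le p}b_k$ and $A_p=\sum_{i\le p}a_i$ and summing by parts against the nonnegative increments $1/\lambda_p-1/\lambda_{p+1}$ of the decreasing weight sequence (the boundary term being controlled the same way), the inequalities $B_p\le A_p$ yield $\sum_k b_k/\lambda_k\le \sum_i a_i/\lambda_i$. Thus $\Lambda BV(g)\le \sum_{i=1}^{2m-1}a_i/\lambda_i$, which together with the lower bound proves the asserted equality.
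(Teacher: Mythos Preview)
Your approach is correct and is exactly what the paper intends: the paper gives no proof beyond the remark that the lemma ``easily follows from Lemma~\ref{L1}'', and your argument (Lemma~\ref{L1} to fix the optimal ordering, then the submajorization $\sum_{k\le p}b_k\le\sum_{i\le p}a_i$ combined with Abel summation) is precisely the way to unpack that remark. The tail--oscillation identity $\operatorname{osc}_{[t_c,2\pi]}g=a_c$ is the right engine and your identification of the decreasing--amplitude hypothesis as the driver is on point.

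One comment on the ``bookkeeping'' you flag as the obstacle: the combination you describe (the bound $|g(I_j)|\le a_{c_j}$ from tail oscillation together with monotone--piece additivity for intervals sharing a piece) does not quite close on its own, since a run of $r\ge 2$ intervals starting in piece~$c$ is only bounded by $2a_c$ that way. A clean way to finish is to note that one may take all endpoints at the nodes~$t_i$ without decreasing the sum (move each endpoint within its piece to the better of the two adjacent nodes; monotonicity on the piece makes one of the two choices non--decreasing for $|g(I)|$, and nonoverlap is preserved by processing left to right). Once $I_j=(t_{r_j},t_{s_j})$ with $r_1<s_1\le r_2<\cdots$, the alternating--series bound gives $|g(I_j)|=\bigl|\sum_{i=r_j}^{s_j-1}(-1)^{i+1}a_i\bigr|\le a_{r_j}$, and since the $r_j$ are strictly increasing integers one has $r_j\ge j$, hence $\sum_j|g(I_j)|\le\sum_j a_{r_j}\le\sum_{j\le p}a_j$. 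This makes the submajorization step fully rigorous with no further bookkeeping.
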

\begin{proof}[Proof of Theorem \ref{T6}]
It is not hard to see, that for any sequence $\Lambda=\{\lambda_n\}$ satisfying (\ref{Lambda}) the class $C(T^2)\cap P\Lambda BV$ is a Banach space with the norm
$$
\|f\|_{P\Lambda BV}:=\|f\|_C+P\Lambda V(f).
$$
Let $\Lambda=\{\lambda_n\}$ be as in Theorem \ref{T6} and denote
$$A_{i,j}=\left[\frac{\pi i}{N+1/2}, \ \frac{\pi (i+1)}{N+1/2}\right)\times
\left[\frac{\pi j}{N+1/2}, \ \frac{\pi (j+1)}{N+1/2},\right).
$$
Define $t_j$ and $m_j$ as in (\ref{tj}) and consider the function
\begin{equation}\label{T6-1}
f_N(x,y)=\sum_{(i,j)\in W}t_j\,
\chi_{A_{i,j}}(x,y)\sin\left(N+\frac12\right)x
\cdot\sin\left(N+\frac12\right)y,
\end{equation}
where $\chi_A(x,y)$ is the characteristic function of the set $A\subset T^2$ and
$$
W:=\left\{(i,j)\,:\,j<i<j+m_j,\quad 1\leq j<N_\delta\right\},\qquad N_\delta=\left(\frac N2\right)^{\frac1{1+\delta}}.
$$
Each summand in the sum (\ref{T6-1}) is continuous on the rectangle $A_{i,j}$ and vanishes on its boundary, hence  $f_N\in C(T^2)$.

Next, in view of Lemma \ref{Lem}, using the same arguments as in the proof of (\ref{4}) and (\ref{5}), we get
$$
\Lambda V_1(f_N)\leq1,\qquad
\Lambda V_2(f_N)\leq1.
$$
Hence $f_N\in P\Lambda BV$ and
\begin{equation}\label{16}
\|f_N\|_{P\Lambda V}\leq 3,\quad  N=1,2,\ldots.
\end{equation}
 Observe that $N_\delta<N$ and $j+m_j<N$, if $j<N_\delta$, hence $A_{i,j}\subset T^2$, if $(i,j)\in W$.
 Taking onto account (\ref{tj}) and (\ref{tjl}), for the square partial sum of the Fourier series of $f_N$ at $(0,0)$ we get
\begin{eqnarray}\label{17}
&\pi& \cdot S_{N,N}[f_N,(0,0)]=\int_{T^2}f_N(x,y)D_N(x)D_N(y)dxdy
\notag\\&=&
\sum_{(i,j)\in W}{t_j}
\int_{A_{i,j}}\frac{\sin^2\left(N+\frac12\right)x
\cdot\sin^2\left(N+\frac12\right)y}{4sin\frac x2sin\frac y2}dxdy\\
&\geq &c
\sum_{j=1}^{N_\delta}\frac {t_j}j
\sum\limits_{i=j+1}^{j+m_j}\frac1i
\geq
c\sum\limits_{j=1}^{N_\delta}
\frac {t_j}j\log(j+m_j)
\geq
c\sum\limits_{j=1}^{N_\delta}
\frac {\gamma_j}j\to\infty .
\notag
\end{eqnarray}
as $N\to\infty$, where $c$ is an absolute constant.

Applying the Banach-Steinhaus Theorem, from (\ref{16}) and (\ref{17}) we obtain that there exists a continuous function $f\in P\Lambda BV$ such that
$$
\sup_N |S_{N,N}[f,(0,0)]|=\infty.
$$
\end{proof}

\end{document}